\documentclass[11pt]{amsart}
\usepackage{pxfonts}
\usepackage{amsmath}
\usepackage{amssymb}
\usepackage{amsthm}
\usepackage{amscd, amsfonts}
\usepackage[dvips]{epsfig}
\usepackage{verbatim}

\usepackage{epsf}
\usepackage[usenames]{color}
\usepackage[bookmarksnumbered,pdfpagelabels=true,plainpages=false,colorlinks=true,
            linkcolor=black,citecolor=black,urlcolor=blue]{hyperref}

\newcommand{\al}{\alpha}
\newcommand{\be}{\beta}
\newcommand{\ga}{\gamma}

\newcommand{\ep}{\epsilon}

\newcommand{\tr}{\operatorname{tr}}

\theoremstyle{plain}
\newtheorem{theorem}{Theorem}[section]
\newtheorem{lemma}[theorem]{Lemma}
\newtheorem{prop}[theorem]{Proposition}

\theoremstyle{definition}
\newtheorem{rema}[theorem]{Remark}
\newtheorem{defi}[theorem]{Definition}

\numberwithin{equation}{section}
%\numberwithin{equation}{subsection}

\setlength{\textwidth}{6.6in} \setlength{\textheight}{8.6in}
\hoffset=-0.6truein \voffset=-0.1truein

\title[A priori estimates for nonconcave PDE]{$C^{2,\al}$ estimates and existence results for certain nonconcave PDE}

\author[Pingali]{Vamsi P. Pingali}
\address{Department of Mathematics\\
412 Krieger Hall, Johns Hopkins University, \\ Baltimore, MD 21218, USA}
\email{vpingali@math.jhu.edu}

\begin{document}
\maketitle
\begin{abstract}
We establish $C^{2,\al}$ estimates for PDE of the form convex $+$ a sum of weakly concave functions of the Hessian, thus generalising a recent result of Collins which is in turn inspired by a theorem of Caffarelli and Yuan. Independently, we also prove an existence result for a certain generalised Monge-Amp\`ere PDE.
\end{abstract}

\section{Introduction}
\indent In the classic paper \cite{Kryl} Krylov studied the following PDE on a convex domain.
%\begin{theorem}{(Krylov)}
%Let $d\geq 2$, m be any integer, $1\leq m \leq d$, and $S_m(\lambda) = S_{m,d}(\lambda)$ be the $m$th elementary symmetric polynomial of the variables $\lambda =(\lambda _1,\ldots,\lambda_d)$. Define $S_0=S_{0,d} =1$. Let $D$ be a bounded domain of class $C^4$ with connected boundary $\partial D$ and such that at any point of $\partial D$ we have $S_{m-1,d-1}(\kappa^1,\ldots, \kappa^{d-1})>0$, where $\kappa ^i$ are the principal curvatures of $\partial D$ evaluated with respect to the interior normal to $\partial D$. Let $l_0, \ldots, l_{m-1} \ \in \ C^2 (\mathbb{R}^d)$, and let $\phi \ \in \ C^4(\mathbb{R}^d)$. For any symmetric matrix $w$, define $\lambda(w)$ as the vector of eigenvalues and $S_m(w) = S_m(\lambda (w))$. Then the equation
\begin{gather}
S_m (D^2 u) = \displaystyle \sum _{k=0} ^{m-1} (l^{+}_k)^{m-k+1}(x) S_k (D^2 u) 
\label{Kryeq}
\end{gather}
where $S_m (A)$ is the $m$th elementary symmetric polynomial of the symmetric matrix $A$. He proved that the corresponding Dirichlet problem has a smooth solution in the ellipticity cone of the equation.
%with $u\vert_{\partial D}=\phi$ has a unique solution $u\in C^{1,1}(\bar{D})$ characterised by the additional property that $S_m(D^2 u +tI) >0$ and $S_m(D^2u +tI) >\displaystyle \sum_{k=0}^{m-1} (l^{+}_k)^{m-k+1}(x) S_k (D^2 u +tI) \ (a.e.) \ in \ D$ for any $t>0$. Moreover, if $\sum l^{+} _k >0$ in $\bar{D}$, then $u\in C^{2,\al}(\bar{D})$ for an $\al \in (0,1)$.
%\label{Krythm}
%\end{theorem}
This was accomplished by reducing the equation to a Bellman equation and then using the standard theory of Bellman equations. Motivated by complex-geometric considerations (Chern-Weil theory) a very special case of equation \ref{Kryeq} was studied in \cite{GenMA} and an existence result was proven using the method of continuity. To this end, \emph{a priori} estimates on the solution were necessary. The $C^{2,\al}$ estimate for such nonlinear PDE is usually given by the Evans-Krylov-Safonov theorem which applies to PDE of the form $F(D^2 u) =0$ where $F$ is a concave function of symmetric matrices. However, it is not immediately obvious that equation \ref{Kryeq} is concave. Yet, upon dividing by $\det(D^2 u)$ and rearranging the equation one can see that it is actually concave and thus amenable to Evans-Krylov theory. \\
\indent Unfortunately, not all PDE can be rewritten to be concave functions of the Hessian. Indeed, not all level sets have a positive second fundamental form. To remedy this partially, Caffarelli and Yuan \cite{Caff} proved a result that roughly speaking, allows one of the eigenvalues of the second fundamental form of the level set of $F(D^2 u)$ to be negative. Using similar ideas, Cabre and Caffarelli  \cite{Caff2} proved $C^{2,\al}$ estimates for functions that are the minimum of convex and concave functions. Even these theorems cannot handle the following PDE that arises in the study of the J-flow on toric manifolds \cite{Col1} \footnote{Actually, the Legendre transform of the solution occurs in the $J$-flow.}.
\begin{gather}
\det(D^2 u) + \Delta u = 1.
\label{natural}
\end{gather}
Moreover, equation \ref{natural} is also a real example of a ``generalised Monge-Amp\`ere" PDE introduced in \cite{GenMA}. \\
\indent In \cite{Col1} Collins and Sz\'ekelyhidi proved interior $C^{2,\al}$ estimates for equation \ref{natural} using ideas from \cite{Caff}. In \cite{Col2} Collins generalised that result to obtain the following theorem. (The precise definition of ``twisted" type equations is recalled in section \ref{prel}.).
\begin{theorem}{(Collins)}
Consider the equation $F(D^2 u, x) = F_{\cup} (D^2 u, x) + F_{\cap} (D^2 u, x)=0$ on the unit ball $B_1$ in $\mathbb{R}^n$. For each $x$, assume that $F$ is of the twisted type. Let $0<\lambda < \Lambda <\infty$ be ellipticity constants for both $F, F_{\cup}$. For every $0<\al<1$ we have the estimate
\begin{gather}
\Vert D^2u \Vert _{C^{\al}(B_{1/2})}\leq C(n,\lambda, \Lambda, \al, \ga, \Gamma, \Vert F_{\cup}\Vert _{L^{\infty}(D^2 u (\bar{B_1}))}, \Vert F_{\cap} \Vert _{L^{\infty}(D^2 u (\bar{B_1}))},\Vert D^2 u \Vert _{L^{\infty} (B_1)}),
\end{gather}
where $0<\gamma = \inf_{x \in F_{\cup}(D^2 u)(B_1)} G^{'}(-x)$ and $\Gamma = osc_{B_1} G(-F_{\cup}(D^2 u))$. ($G$ is defined in section \ref{prel}.)
\label{Colthm}
\end{theorem}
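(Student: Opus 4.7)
\begin{sketch}
The plan is to exploit the twisted structure encoded by $G$ in order to rewrite the equation as a concave PDE with H\"older right-hand side, so that the Evans-Krylov-Safonov theorem applies. The strategy is in the spirit of \cite{Caff} and \cite{Col1}.

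\textbf{Reformulation.} On $\{F=0\}$ one has $-F_{\cup}(D^2 u, x) = F_{\cap}(D^2 u, x)$. Assuming $G$ is concave and increasing with $G' \geq \ga > 0$ (as suggested by the statement and by what I take the twisted condition to mean), composition yields
\[
H(D^2 u, x) := G(F_{\cap}(D^2 u, x))
\]
concave in $D^2 u$, while along any solution
\[
H(D^2 u, x) = G(-F_{\cup}(D^2 u, x)) =: f(x).
\]
Therefore, once $f \in C^{\al}(B_{1/2})$ with a quantitative bound, Evans-Krylov-Safonov applied to the concave, uniformly elliptic equation $H(D^2 u, x) = f(x)$ delivers the desired $C^{2,\al}$ estimate on $u$.

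\textbf{H\"older bound on the convex piece.} The heart of the proof is obtaining H\"older regularity for the function $x \mapsto F_{\cup}(D^2 u(x), x)$ without assuming $D^2 u$ is H\"older. In a small ball $B_r \subset B_1$, convexity of $F_{\cup}$ in $D^2 u$ makes $F_{\cup}(D^2 u)$ satisfy a Krylov-Safonov weak Harnack inequality from below, while the concavity of $G(-F_{\cup}(D^2 u))$ (a concave increasing function composed with the concave function $-F_{\cup}$ of the Hessian) gives the opposite-sided weak Harnack. Playing these two one-sided measure estimates against each other via an ABP-type covering argument precludes simultaneous large positive and large negative deviations of $F_{\cup}$ from its average on $B_r$, yielding an oscillation decay
\[
\mathrm{osc}_{B_{\theta r}} F_{\cup}(D^2 u) \leq \mu \cdot \mathrm{osc}_{B_{r}} F_{\cup}(D^2 u),
\]
with $\theta \in (0,1)$ and $\mu < 1$ depending only on $n,\la,\La,\ga,\Ga$ and the $L^\infty$ data. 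Iterating this produces the H\"older bound; $f \in C^{\al}$ then follows from the Lipschitz regularity of $G$ on the relevant range (bounded in terms of $\Ga$ and $\|F_{\cup}\|_{L^\infty}$), and Evans-Krylov-Safonov closes the argument.

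The principal obstacle is this second step. The concavity of $G(-F_{\cup}(D^2 u))$ alone gives H\"older control only for this composition, not directly for $F_{\cup}$; the quantitative inversion of $G$ is precisely where the constants $\ga$ (lower bound on $G'$) and $\Ga$ (oscillation of $G(-F_{\cup})$) enter, and it is also where the delicate interplay of the two one-sided weak Harnack inequalities is needed rather than a naive inversion. Once H\"older regularity of $F_{\cup}(D^2 u)$ is secured, the remainder of the argument is a relatively standard invocation of the concave Evans-Krylov-Safonov machinery.
\end{sketch}
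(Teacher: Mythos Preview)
There are two genuine gaps. First, the operator $H = G \circ F_{\cap}$ is concave but not uniformly elliptic: the hypotheses supply ellipticity constants only for $F$ and $F_{\cup}$, while $F_{\cap}$ is merely degenerate elliptic (Definition~\ref{gentwist}). In the motivating example $\det(D^2 u) + \Delta u = 1$ the concave piece is $\det(D^2 u)$ (or $(\det)^{1/n}$ after composing with $G$), whose linearisation degenerates as an eigenvalue of $D^2 u$ approaches zero, and no positive lower bound on $D^2 u$ is assumed. Hence even if you succeed in proving $f \in C^{\al}$, Evans--Krylov--Safonov does not apply to $H(D^2 u) = f$.

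Second, your two one-sided Harnack inequalities are not both available. Taking $m=1$ in Lemma~\ref{subso} gives $L\big(G(-F_{\cup}(D^2 u))\big) \le 0$, so $G(-F_{\cup})$ is a supersolution of the linearised operator and the weak Harnack inequality applies to it from one side. But $F_{\cup}(D^2 u)$ itself is \emph{not} a subsolution of $L$: expanding $L(F_{\cup}(D^2 u))$ produces a term $-F_{\cup}^{ij} F_{\cap}^{ab,cd} u_{abi} u_{cdj}$ whose sign is uncontrolled, because $F_{\cap}$ is only assumed concave after composition with $G$, not concave on its own. So the oscillation-decay iteration you describe for $F_{\cup}(D^2 u)$ cannot be started.

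The argument in \cite{Caff,Col2} (and in the present paper for general $m$) does not pass through H\"older regularity of $F_{\cup}(D^2 u)$ at all. It runs a dichotomy at dyadic scales: either the set $\{F_{\cup}(D^2 u) \le t_k - \xi\}$ has small measure, in which case convexity of $F_{\cup}$ together with the ABP estimate yields a quadratic polynomial $P$ with $F(D^2 P)=0$ close to the rescaled $u$; or it does not, and then the one-sided weak Harnack for the supersolution $G(-F_{\cup})$ forces $s_k = \min G(-F_{\cup})$ to increase by a fixed amount $\theta$. The second alternative can occur at most $\Gamma/\theta$ times, so the first eventually holds and one obtains the approximating paraboloid directly (Proposition~\ref{close}); iterating this gives $C^{2,\al}$. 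The convexity of $F_{\cup}$ is thus used through ABP and tangent hyperplanes, not through a second Harnack inequality, and the uniformly elliptic operator to which Evans--Krylov-type reasoning is applied is $F_{\cup}$, never $G\circ F_{\cap}$.
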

%\indent Motivated by these developments, in this paper we prove the following existence result.
%\begin{theorem}
%Let $D$ be a strictly convex domain in $\mathbb{R}^n$, i.e., there exists a smooth proper function $\rho : \bar{D} \rightarrow \mathbb{R}$ such that $\rho _{ij} > K \delta _{ij}$ for a constant $K>0$, $\nabla \rho \vert _{\partial D}  \neq 0$, $\rho^{-1}(0)=\partial D$ and $\rho^{-1}(-\infty,0) = D$. Let $\sigma_{k,B}(A)$ be the coefficient of $t^k$ in $\det(B+tA)$. Assume that $g:\bar{D} \rightarrow \mathbb{R}_{> 0}, f_k : \bar{D} \rightarrow \mathbb{R}_{\geq 0}$ are smooth functions. Also assume that $A, B_k$ are smooth, positive-definite $n\times n$ real matrix-valued functions on $\bar{D}$. The following PDE has a unique, smooth, solution $u$ such that $D^2 u > 0$.
%\begin{gather}
%H(D^2 u,x) = \tr (AD^2u) + \sum_{k=2}^{n} f_k \sigma_{k,B_k} (D^2 u) = g \nonumber \\
%u\vert_{\partial D} = 0.
%\label{main}
%\end{gather}
%\label{mainthm}
%\end{theorem}
%Notice that the ``priority" is given to the laplacian term as opposed to the determinant term in theorem \ref{Krythm}, i.e., the trace term in equation \ref{main} is not allowed to vanish unlike in theorem \ref{Krythm}. In order to prove theorem \ref{mainthm} we need $C^{2,\al}$ estimates that are more general than those in \cite{Col2}.
Motivated by these developments, in this paper we prove the following improvement of Collins' result.
\begin{theorem}
Consider the equation $F(D^2 u, x) = F_{\cup} (D^2 u, x) + \displaystyle \sum _{\al=1}^m F_{\cap, \al} (D^2 u, x) = 0$ on the unit ball $B_1$ in $\mathbb{R}^n$. For each $x$, assume that $F$ is of the ``generalised" twisted type. Let $0<\lambda < \Lambda <\infty$ be ellipticity constants for both $F, F_{\cup}$. For every $0<\al<1$ we have the estimate
\begin{gather}
\Vert D^2u \Vert _{C^{\al}(B_{1/2})}\leq C(n,\lambda, \Lambda, \al, \ga, \Vert F_{\cup}\Vert _{L^{\infty}(D^2 u (\bar{B_1}))}, \Vert F_{\cap} \Vert _{L^{\infty}(D^2 u (\bar{B_1}))},\Vert D^2 u \Vert _{L^{\infty} (B_1)}, \Vert G \Vert _{L^{\infty}(W)}),
\end{gather}
where $0<\gamma = \inf_{\{x \in W \} } G^{'}(x)$ and $W=\displaystyle \bigcup _{\al =1} ^{m}F_{\cap, \al} (D^2 u (\bar{B}_1)) \bigcup _{1\leq j\leq m} \bigcup _{\{ x \ \in \ \bar{B}(1) \}}  \sum_{\al =1 }^{j}  F_{\cap, \al}(D^2 u (x))$.
\label{estimates}
\end{theorem}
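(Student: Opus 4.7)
\medskip
The strategy is to adapt the Caffarelli--Yuan and Collins framework, whose essential idea is to rewrite an equation of the form $F_\cup + F_\cap = 0$ as the zero set of a function that is concave in $D^2 u$, via composition with a suitable increasing concave $G$. Once the equation has been recast in this way, Evans--Krylov--Safonov produces the interior $C^{2,\al}$ estimate on $u$, and the lower bound $G' \ge \ga > 0$ transfers the estimate back to the original equation. With $m$ concave summands rather than one, I will apply this construction iteratively along the partial sums.

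Specifically, set $T_j(D^2 u, x) = \sum_{\al=1}^{j} F_{\cap,\al}(D^2 u, x)$, so the equation reads $F_\cup + T_m = 0$ with $T_0 \equiv 0$. The set $W$ appearing in the statement is exactly what this iteration needs: it contains both the individual ranges $F_{\cap,\al}(D^2 u(\bar B_1))$ and all partial-sum ranges $T_j(D^2 u(\bar B_1))$. The generalised twisted type hypothesis, to be recalled in Section~\ref{prel}, should supply a single increasing concave $G$ on $W$ that can be composed with each partial sum and that satisfies $\inf_W G' \ge \ga > 0$ and $\|G\|_{L^\infty(W)} < \infty$. I would then proceed by induction on $j$: at step $j$ treat $F_\cup + T_{j-1}$ as the current ``convex-like'' component and $F_{\cap,j}$ as the new concave increment, then apply $G$ and exploit concavity to obtain, as in Collins, an auxiliary concave equation sharing its zero set with $F_\cup + T_j = 0$. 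Evans--Krylov--Safonov then yields the step's $C^{2,\al}$ bound on $u$, and the lower bound on $G'$ transfers it back to an estimate for the original piece.

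The main obstacle, I expect, is the bookkeeping required to propagate constants through the $m$ iterations while preserving exactly the dependence claimed. The ellipticity constants $\la, \La$ must not degenerate under successive compositions with $G$, and the disappearance of Collins' oscillation constant $\Gamma$ in favour of $\|G\|_{L^\infty(W)}$ suggests that a slightly sharper quantitative form of the Caffarelli--Yuan inequality is needed at each stage; showing that this sharper form survives the iteration, with a uniform $\ga$, is the key technical point. Once that is in hand, the Krylov--Safonov Harnack step and the concavity--Evans--Krylov step should go through essentially as in \cite{Col2}, with only notational changes to accommodate the multiple concave summands.
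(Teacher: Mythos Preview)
Your proposal has a genuine gap, and it stems from a mischaracterization of what the Caffarelli--Yuan/Collins machinery actually does. The twisted-type equation is \emph{not} rewritten as the zero set of a function concave in $D^2 u$ and then fed to Evans--Krylov; if that were possible there would be nothing to prove. What Collins (following Caffarelli--Yuan) actually shows is that an auxiliary quantity---in the paper's setting, $\sum_{\al} G(F_{\cap,\al}(D^2 u))$---is a \emph{supersolution} of the linearised operator $L$, and then uses the weak Harnack inequality together with a measure-concentration/ABP argument to produce the quadratic polynomial approximation that drives the $C^{2,\al}$ estimate. Your outline never mentions this supersolution step or the Harnack/measure argument, and instead speaks of ``an auxiliary concave equation sharing its zero set,'' which is not what is happening.

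More seriously, your proposed induction on the partial sums $T_j$ cannot get off the ground. At step $j$ you want to regard $F_\cup + T_{j-1}$ as the ``convex-like'' piece and $F_{\cap,j}$ as the concave increment, but $F_\cup + T_{j-1}$ is a convex function plus a sum of (weakly) concave functions and is in general neither convex nor concave. The Collins argument uses convexity of $F_\cup$ in an essential way (it enters the sign analysis of the linearised computation and the polynomial-perturbation step), so you cannot simply iterate his $m=1$ result. The paper avoids this entirely: it treats all $m$ concave pieces at once by proving directly that $L\bigl(\sum_{\al} G(F_{\cap,\al}(D^2 u))\bigr)\le 0$. The key point of that computation is a cancellation of the cross terms $F_{\cap,\al}^{ab}F_{\cap,\be}^{ijrs}$ when the \emph{same} $G$ is used for every $\al$; this cancellation is precisely what an iterative scheme would forfeit. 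After that single lemma, the Caffarelli--Yuan polynomial-approximation step (Proposition~\ref{close}) goes through with only minor changes, and the remaining steps are identical to \cite{Caff}.
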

The proof of theorem \ref{estimates} follows the arguments (with some modifications) in \cite{Col2, Caff}. Independently, we also prove the following existence result.
\begin{prop}
Consider the following PDE,
\begin{gather}
\det(D^2 u) + \displaystyle \sum _{k=2} ^n S _k (D^2 u) = f \ on \ D \nonumber \\
u \vert_{\partial D} = \phi,
\label{locequa}
\end{gather}
where $S _k$ is the $k$th symmetric polynomial (for instance $\sigma _n$ is the determinant), $f: \bar{D} \rightarrow (n-1, \infty)$ and $\phi$ are smooth functions (with $\phi$ being the restriction to $\partial D$ of a smooth function on $\bar{D}$), and $D$ is a strictly convex domain with a proper smooth defining function $\rho$, i.e., $\rho ^{-1} (0) = \partial D$, $\rho ^{-1}(-\infty,0) = D$, $\nabla \rho \neq 0 \ on \ \partial D$, and $D^2 \rho \geq C I$ ($C>0$ is a constant). It has a unique smooth solution $u$ such that $D^2 u > -I$ and $\frac{\partial}{\partial \lambda _i} (\lambda _1\lambda _2 \ldots \lambda _n + \displaystyle \sum _{k=2} ^n \sigma _k (\vec{\lambda})) > 0 \ \forall \ i$ where $\lambda _i$ are the eigenvalues of $D^2 u$.
\label{locthm}
\end{prop}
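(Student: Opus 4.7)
The plan is to apply the method of continuity. First I would substitute $v = u + \frac{1}{2}|x|^2$, so that $D^2 v = I + D^2 u$; the cone condition $D^2 u > -I$ then becomes strict convexity of $v$, and the identity $\det(I+A) = \sum_{k=0}^n \sigma_k(A)$ recasts equation \eqref{locequa} as
\begin{gather*}
\det(D^2 v - I) + \det(D^2 v) - \Delta v = f - (n-1),
\end{gather*}
whose right-hand side is strictly positive by the hypothesis $f > n-1$. I would interpolate from the classical Monge-Amp\`ere problem at $t=0$ to the full equation at $t=1$, for example by
\begin{gather*}
\det(D^2 v_t) + t\bigl[\det(D^2 v_t - I) - \Delta v_t\bigr] = (1-t)h_0 + t\bigl[f-(n-1)\bigr],
\end{gather*}
with boundary data held fixed; at $t=0$ this is the classical Monge-Amp\`ere problem on a strictly convex domain, solvable by Caffarelli-Nirenberg-Spruck.

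Openness of the admissible set of $t$ is routine: at any solution $v_t$ in the ellipticity cone the linearisation $F_t^{ij}\partial_{ij}$ is uniformly elliptic by $\partial F/\partial\lambda_i > 0$, so the implicit function theorem on $C^{2,\al}(\bar D)$ applies. Closedness reduces to uniform a priori $C^{2,\al}$ estimates. The $C^0$ bound follows by comparing $v_t$ with barriers of the form $A\rho$, which lie well inside the ellipticity cone since $D^2\rho \geq CI$; the interior gradient estimate is the maximum principle applied to $|\nabla v_t|^2 + \La v_t^2$, and the boundary gradient estimate uses barriers built from $\rho$. For the $C^2$ estimate I would follow the standard three-step boundary reduction: the tangential-tangential second derivatives on $\partial D$ are read off from $\phi$; the mixed tangential-normal derivatives are controlled by a Trudinger-type barrier exploiting the strict convexity of $\partial D$; and the doubly normal second derivative is extracted from the equation itself combined with the lower bound on $\partial F/\partial\lambda_i$. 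Interior $C^2$ is then reduced to the boundary by a Pogorelov-type maximum principle on $e^{\la v}v_{\xi\xi}$, obtained after differentiating the equation twice in a unit direction $\xi$.

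The principal obstacle is the $C^{2,\al}$ step, since the operator is manifestly not concave. My plan is to appeal to Theorem \ref{estimates}: one decomposes $F = F_\cup + \sum_\al F_{\cap,\al}$ by absorbing the linear $-\Delta v$ piece (and, if need be, a suitable multiple of the trace) into the convex part $F_\cup$, and recognising $\det(D^2 v)$ and $\det(D^2 v - I)$ as the weakly concave summands $F_{\cap,\al}$ on the ellipticity cone; verifying that this decomposition satisfies the generalised twisted type condition is the technical heart of the argument. Once the $C^{2,\al}$ estimate is secured, linear Schauder bootstrap upgrades $v$ to $C^\infty(\bar D)$, and uniqueness is immediate from the comparison principle for the elliptic operator.
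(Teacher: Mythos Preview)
Your substitution $v = u + \tfrac{1}{2}|x|^2$ is exactly the paper's first move, but from there the two arguments diverge sharply. The paper observes that in the new variable the equation reads simply
\[
\det(D^2 v) - \Delta v = f - (n-1),
\]
which is precisely of Krylov's form $S_n(D^2 v) = \sum_{k<n} (l_k^+)^{n-k+1} S_k(D^2 v)$ with $l_1^+ \equiv 1$ and $l_0^+ = (f-n+1)^{1/(n+1)} > 0$. Krylov's existence theorem then applies as a black box; no estimates are done by hand. The paper stresses that this proposition is proved \emph{independently} of Theorem~\ref{estimates}.

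Your plan to obtain the $C^{2,\al}$ estimate from Theorem~\ref{estimates} does not go through, and this is a genuine gap rather than a technical detail. After the substitution the operator contains the term $-\Delta v$, which is \emph{anti}-elliptic. Definition~\ref{gentwist} requires every summand $F_{\cup}$ and $F_{\cap,\al}$ to be (possibly degenerate) elliptic, and $F_{\cup}$ to be uniformly elliptic. Your proposal to ``absorb the linear $-\Delta v$ piece into the convex part $F_{\cup}$'' therefore fails: $-\Delta v$ destroys the ellipticity of whatever slot it lands in, and no rearrangement such as adding and subtracting a multiple of the trace repairs this, since the negative trace has to live somewhere. Working instead in the original variable $u$ does not help either, because on the cone $D^2 u > -I$ the functions $\sigma_k(D^2 u)$ are not elliptic and there is no linear (trace) term to serve as $F_{\cup}$.

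In short, the equation transformed to $v$ is exactly of the type the introduction warns about: it is not visibly concave, yet becomes concave after dividing by $\det(D^2 v)$ (equivalently, it fits Krylov's Bellman-equation framework). The twisted-type machinery of Theorem~\ref{estimates} is designed for a different structural situation (a genuine convex elliptic piece plus weakly concave elliptic pieces) and does not cover $\det(D^2 v) - \Delta v = g$. The paper's one-line reduction to Krylov is the correct route here.
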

The requirement $f>n-1$ is not optimal. But we give a counterexample for finding solutions in the ellipticity cone in the case $f<0$. Notice that this seemingly harder equation has an existence result but it is still not clear whether equation \ref{natural} does. \\
\indent The layout of the paper is as follows. In section \ref{prel} we give the definitions of twisted type equations and give an example of its applicability. In section \ref{easy} we prove proposition \ref{locthm} and discuss its hypotheses.\\
\emph{Acknowledgements} : The author thanks Professor Joel Spruck for his suggestions and Tristan Collins for answering queries about his paper.
\section{Preliminaries}\label{prel}
In this section we present the definitions and prove some basic results. \\
Firstly, we define what it means for a PDE to be of the generalised twisted type. The following definition generalises Collins' \cite{Col2}.
\begin{defi}
Let $F(D^2u)=0$ be a uniformly elliptic equation on the unit ball $B_1$. It is said to be of the generalised twisted type if $F = F_{\cup} + \displaystyle \sum _{\al =1}^m F_{\cap, \al}$ where
\begin{enumerate}
\item $F_{\cup}$ and $\forall \ 1\leq\al\leq m \ F_{\cap,\al}$ are (possibly degnerate) elliptic $C^2$ functions on an open set containing $D^2 u (\bar{B_1})$.
\item $F_{\cup}$ is convex and uniformly elliptic, and $\displaystyle \sum _{\al =1}^m F_{\cap ,\al}$ is weakly concave in the sense of definition \ref{weco}.
\end{enumerate}
\label{gentwist}
\end{defi}
The definition of weak concavity in our case is as follows.
\begin{defi}
We say that  $\displaystyle \sum _{\al =1}^m F_{\cap ,\al}$ is weakly concave if there exists a function $G : U\rightarrow \mathbb{R}$ such that
\begin{enumerate}
\item The domain $U$ contains a connected open set $V$ with compact closure containing $W=\displaystyle \bigcup _{\al =1} ^{m}F_{\cap, \al} (D^2 u (\bar{B}_1)) \bigcup _{1\leq j\leq m} \bigcup _{\{ x \ \in \ \bar{B}(1) \}}  \sum_{\al =1 }^{j}  F_{\cap, \al}(D^2 u (x))$.
\item $G^{'} >0$, $G^{''}\leq 0$, and $G(F_{\cap,\al}(.))$ is concave for all $1\leq \al \leq m$.
\item For all $x \ \in \ \bar{B}(1)$ and $1\leq \al \leq m$ consider $y_{\al}(x)=F_{\cap, \al} (D^2 u (x))$. There exists a constant $1\geq c>0$ independent of $x$ such that $\displaystyle  \sum _{i=1} ^m G(y_i(x))\geq G\left(\sum _{i=1}^{m} y_i(x)\right) \geq c\sum _{i=1}^{m}  G( y_i(x))$.
\end{enumerate}
\label{weco}
\end{defi}
Definition \ref{weco} might seem somewhat convoluted and unnatural compared to the analogous one in \cite{Col2}. Firstly, we remark that condition $(3)$ is actually redundant in many cases of interest (but we choose to impose it since it appears naturally in our proofs). Indeed,
\begin{prop}
Given a function $\tilde{G}$ that satisfies requirements $(1),(2)$ of definition \ref{weco}  such that $W \subseteq \mathbb{R}_{\geq 0}$, automatically satisfies requirement $(3)$, i.e.,
\begin{gather}
\displaystyle  \sum _{\al=1} ^m \tilde{G}(y_{\al}(x)) \geq  \tilde{G}\left(\sum_{al=1} ^m y_{\al} (x) \right) \geq \frac{1}{2^m} \sum _{\al=1} ^m \tilde{G}(y_{\al}(x)).
\end{gather}
\label{redu}
\end{prop}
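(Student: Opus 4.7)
The plan is to prove both inequalities by induction on $m$, after a preliminary normalisation ensuring $\tilde{G} \geq 0$ on $W$. This normalisation is harmless: conditions $(1)$ and $(2)$ of Definition \ref{weco} are preserved under $\tilde{G} \mapsto \tilde{G} + c$ for any constant $c$, since $\tilde{G}'$, $\tilde{G}''$, and the Hessian of each $\tilde{G} \circ F_{\cap,\alpha}$ are unaffected. Choosing $c$ large (and extending $\tilde{G}$ by continuity if $0 \notin V$, which only involves the compact set $\bar V$) I may assume $\tilde{G}(0) \geq 0$, so $\tilde{G} \geq 0$ on $\mathbb{R}_{\geq 0} \supseteq W$ by monotonicity. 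The base case $m=1$ is then trivial.

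For the upper estimate I would derive subadditivity of $\tilde{G}$ on $\mathbb{R}_{\geq 0}$: for $a,b \geq 0$, the convex combination $a = \tfrac{a}{a+b}(a+b) + \tfrac{b}{a+b}\cdot 0$ together with concavity gives $\tilde{G}(a) \geq \tfrac{a}{a+b}\tilde{G}(a+b) + \tfrac{b}{a+b}\tilde{G}(0)$, and adding the analogous bound for $\tilde{G}(b)$ yields $\tilde{G}(a)+\tilde{G}(b) \geq \tilde{G}(a+b)+\tilde{G}(0) \geq \tilde{G}(a+b)$. Applied with $a = \sum_{\alpha < m} y_\alpha$ and $b = y_m$, together with the inductive upper bound for $m-1$ terms, this yields $\sum_\alpha \tilde{G}(y_\alpha) \geq \tilde{G}(\sum_\alpha y_\alpha)$.

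For the lower estimate, note that $\sum_{\alpha < m} y_\alpha$ and $y_m$ are each dominated by $\sum_\alpha y_\alpha$, so monotonicity gives
\[
\tilde{G}\!\left(\sum_{\alpha=1}^m y_\alpha\right) \;\geq\; \tfrac{1}{2}\,\tilde{G}\!\left(\sum_{\alpha=1}^{m-1} y_\alpha\right) + \tfrac{1}{2}\,\tilde{G}(y_m).
\]
By the inductive lower bound, the first summand is at least $\tfrac{1}{2^m}\sum_{\alpha < m}\tilde{G}(y_\alpha)$, and since $\tfrac{1}{2} \geq \tfrac{1}{2^m}$ and $\tilde{G}(y_m) \geq 0$, the second is at least $\tfrac{1}{2^m}\tilde{G}(y_m)$; summing produces exactly $\tfrac{1}{2^m}\sum_\alpha \tilde{G}(y_\alpha)$. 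The doubling of the denominator at each inductive step is precisely what matches the claimed constant $2^{-m}$.

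The only genuinely delicate point is thus the preliminary normalisation: without $\tilde{G} \geq 0$ on $W$, subadditivity can fail and the positive constant $c = 2^{-m}$ in the lower bound becomes incompatible with $\tilde{G}(\sum y_\alpha)$ being more negative than the individual $\tilde{G}(y_\alpha)$. Once that reduction is set up, everything else is a routine two-line induction.
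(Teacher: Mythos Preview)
Your normalisation step is not valid. You correctly observe that conditions $(1)$ and $(2)$ of Definition~\ref{weco} are invariant under $\tilde G \mapsto \tilde G + c$, but the \emph{conclusion} you are trying to establish is not. Under that shift the upper inequality becomes $\sum_\alpha \tilde G(y_\alpha) + (m-1)c \geq \tilde G(\sum_\alpha y_\alpha)$, and the lower becomes $\tilde G(\sum_\alpha y_\alpha) \geq 2^{-m}\sum_\alpha \tilde G(y_\alpha) + c\,(m2^{-m}-1)$; for $c>0$ and $m\geq 2$ both are strictly weaker than the original statements. So proving the displayed inequality for $\tilde G + c$ does not yield it for $\tilde G$. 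Concretely, take $\tilde G(y)=y-10$ (concave, increasing), $m=2$, $y_1=y_2=1$: then $\tilde G(y_1)+\tilde G(y_2)=-18 < -8 = \tilde G(y_1+y_2)$, so the upper bound fails outright. The nonnegativity of $\tilde G$ on $W$ is therefore a genuine hypothesis, not something you can manufacture after the fact.

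The paper's own proof has the same blind spot: it simply writes $-\tilde G(0)=0$ in the subadditivity step, tacitly assuming $\tilde G(0)=0$. So this is really a missing hypothesis in the proposition rather than a flaw specific to your argument. Once one \emph{assumes} $\tilde G(0)\geq 0$ (hence $\tilde G\geq 0$ on $\mathbb{R}_{\geq 0}$ by monotonicity), your two inductions are correct and match the paper's approach: subadditivity from concavity together with $\tilde G(0)\geq 0$, and the lower bound from $\tilde G(y+z)\geq \tfrac12(\tilde G(y)+\tilde G(z))$ iterated. Your derivation of the latter via monotonicity alone is a minor variant of the paper's midpoint-concavity argument, but both need $\tilde G(y_m)\geq 0$ to absorb the mismatched coefficient $\tfrac12$ versus $2^{-m}$ at the inductive step.
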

\emph{Proof}   Consider the function $T(y) = \tilde{G}(y+z)-\tilde{G}(y)-\tilde{G}(z)$ for a fixed $z \geq 0$. By the concavity of $G$ we see that $T^{'}(y) \leq 0$. Hence $\tilde{G}(y+z)-\tilde{G}(y)-\tilde{G}(z) \leq -\tilde{G}(0)=0$. Using induction we see that $\displaystyle\sum _{\al=1} ^m \tilde{G}(y_{\al}(x)) \geq  \tilde{G}\left(\sum_{\al=1} ^m y_{\al}(x) \right)$. The concavity of $G$ implies that $\tilde{G}\left(\frac{y+z}{2}\right) \geq \frac{\tilde{G}(y) + \tilde{G}(z)}{2}$. Since $\tilde{G}$ is increasing this implies that $\tilde{G}(y+z)\geq \frac{\tilde{G}(y) + \tilde{G}(z)}{2}$. Induction gives the desired result. \qed \\
\begin{rema}\label{moregen}
 Furthermore, it is more natural to have a different $G_{\al}$ that works for $F_{\cap, \al}$. However, under mild conditions on such $G_{\al}$ one may produce a $G$ that works for all $1\leq \al \leq m$. Indeed, assume that $\bar{V} \subset \mathbb{R} _{\geq 0}$, and $G_{\al}$ are such that on the appropriate compact sets $G_{\al} \geq 0$, $G_{\al} ^{'}  \geq 1$ and $G_1 (\bar{V}) \subseteq \mathrm{dom}(G_2), \ G_2(G_1(\bar{V})) \subseteq \mathrm{dom}(G_3) \ldots$. \\
\indent Consider the function $H_{k} = G_{k} \circ G_{k-1} \ldots \circ G_1$. Notice that
\begin{gather}
D^2 H_k(F_{\cap, k}) = H_k ^{''} DF_{\cap, k} DF_{\cap, k} + H_k^{'}D^2 F_{\cap, k} \nonumber \\
= (G_k ^{''} (H_{k-1} ^{'})^2 + G_k ^{'} H_{k-1} ^{''}) DF_{\cap, k} DF_{\cap, k} + G_{k} ^{'} H_{k-1} ^{'} D^2 F_{\cap, k} \nonumber
\end{gather}
Inductively we may assume that $H_{k-1} ^{'} \geq 1$. Thus we get
\begin{gather}
D^2 H_k(F_{\cap, k}) \leq H_{k-1} ^{'} (G_k ^{''}DF_{\cap,k}DF_{\cap, k} + G_k ^{'}D^2 F_{\cap, k}) + G_k ^{'} H_{k-1} ^{''}DF_{\cap,k}DF_{\cap, k} \leq 0\nonumber
\end{gather}
where we used the facts that $G_{k} \circ F_{\cap, k}$ is concave, $H_{k-1}^{'} > 0$, $G_k ^{'} > 0$, and $H_{k-1}$ is concave. Now notice that if $H$ is any concave increasing function and $Y(A)$ is any concave function of symmetric matrices, then $D^2(H\circ Y) = H^{''} DY DY + H^{'}D^2 Y \leq 0$. This means that $H_{m} \circ F_{\cap ,\al}$ is concave for all $1\leq \al \leq m$. Using proposition \ref{redu} we are done.
\end{rema}
\indent Now we give an example of an equation that satisfies the conditions imposed by theorem \ref{estimates}.
\begin{prop}\label{sat}
Consider the following equation on a domain $\Omega$.
\begin{gather}
H(D^2 u,x) = \tr (AD^2u) + \sum_{k=2}^{n} f_k \sigma_{k,B_k} (D^2 u) = g 
\label{main}
\end{gather}
where $g:\bar{\Omega} \rightarrow \mathbb{R}_{> 0}, f_k : \bar{\Omega} \rightarrow \mathbb{R}_{\geq 0}$ are smooth functions. Also assume that $A, B_k$ are smooth, positive-definite $n\times n$ real matrix-valued functions on $\bar{\Omega}$.  $\sigma_{k,B}(A)$ be the coefficient of $t^k$ in $\det(B+tA)$. Equation \ref{main} is of the generalised twisted type on every ball $B_r(x_0) \subseteq \Omega$ if $D^2 u > 0$ on $\bar{\Omega}$.
\end{prop}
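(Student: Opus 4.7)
The plan is to exhibit an explicit decomposition $F = F_{\cup} + \sum_{\al=1}^{n-1} F_{\cap, \al}$ together with a single scalar function $G$ witnessing the weak concavity, and then verify the requirements of Definitions \ref{gentwist} and \ref{weco}. I would set
\begin{equation*}
F_{\cup}(D^2u, x) := \tr(A(x) D^2u) - g(x), \qquad F_{\cap, \al}(D^2u, x) := f_{\al+1}(x)\,\sigma_{\al+1, B_{\al+1}(x)}(D^2u)
\end{equation*}
for $1 \leq \al \leq m := n-1$. Since $F_{\cup}$ is affine in $D^2u$, it is convex; its matrix derivative is $A(x)$, whose eigenvalues are uniformly pinched in a positive interval over any fixed compact ball $\bar{B}_r(x_0)$, providing the required uniform ellipticity of $F_{\cup}$ and (because the contributions from $F_{\cap, \al}$ are nonnegative below) of $F$.

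The next step is to extract the concavity structure of each $F_{\cap, \al}$. Expanding $\det(B + tA) = \det(B)\prod_i(1 + t\mu_i)$ in $t$, with $\mu_i$ the eigenvalues of $B^{-1}A$, yields $\sigma_{k, B}(A) = \det(B)\,\sigma_k(\mu(B^{-1}A))$. For $D^2u > 0$, $B_k^{-1}D^2u$ is conjugate to the positive matrix $B_k^{-1/2}(D^2u)B_k^{-1/2}$, so its eigenvalues are strictly positive and we sit inside G\aa rding's cone $\Gamma_k$, where $\sigma_k$ is (degenerately) elliptic; this yields the ellipticity of each $F_{\cap, \al}$ on an open neighbourhood of $D^2u(\bar{B}_r(x_0))$. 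G\aa rding's classical inequality says $\sigma_k^{1/k}$ is concave on positive definite matrices, and composing with the linear change of variables $A \mapsto B_k^{-1/2}AB_k^{-1/2}$ together with the nonnegative scalar factor $f_{\al+1}(x)^{1/(\al+1)}$ shows that $(f_{\al+1}\,\sigma_{\al+1, B_{\al+1}}(D^2u))^{1/(\al+1)}$ is concave in $D^2u$.

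I would then take $G(y) := y^{1/n}$, so that $G' > 0$ and $G'' \leq 0$, and observe
\begin{equation*}
G(F_{\cap, \al}) = \bigl[\bigl(f_{\al+1}\,\sigma_{\al+1, B_{\al+1}}(D^2u)\bigr)^{1/(\al+1)}\bigr]^{(\al+1)/n}.
\end{equation*}
This realizes $G \circ F_{\cap, \al}$ as the composition of the nonnegative concave function from the previous paragraph with the concave increasing function $t \mapsto t^{(\al+1)/n}$, valid because $\al+1 \leq n$, hence concave. Because $F_{\cap, \al} \geq 0$ and every partial sum appearing in the set $W$ is nonnegative, $W \subseteq \mathbb{R}_{\geq 0}$, so condition (3) of Definition \ref{weco} is automatic by Proposition \ref{redu}.

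The main conceptual step is the concavity of $G \circ F_{\cap, \al}$, which rests squarely on G\aa rding's $\sigma_k^{1/k}$-concavity and on the observation that $(\al+1)/n \leq 1$ renders $t \mapsto t^{(\al+1)/n}$ concave increasing on $[0,\infty)$. A minor technicality is that $y^{1/n}$ fails to be $C^2$ at the origin, which is relevant only if some $f_k$ vanishes at points of $\bar{B}_r(x_0)$; this can be finessed either by restricting to the generic regime $f_k > 0$ or by invoking Remark \ref{moregen} to glue together suitably scaled $C^2$ regularisations $G_{\al}$ into a common $G$.
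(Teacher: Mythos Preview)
Your proof is correct and follows essentially the same route as the paper: the same decomposition into $F_{\cup}=\tr(A\,D^2u)$ (you also absorb $-g$, which is harmless) and $F_{\cap,\al}=f_{\al+1}\sigma_{\al+1,B_{\al+1}}$, the same choice $G(y)=y^{1/n}$, and the same key input that $\sigma_k^{1/k}$ is concave on the positive cone so that $(\sigma_k)^{1/n}=((\sigma_k)^{1/k})^{k/n}$ is concave as well. Your write-up is in fact more careful than the paper's in two respects: you spell out the composition $t\mapsto t^{(\al+1)/n}$ that bridges $1/k$-concavity to $1/n$-concavity, and you flag the $C^2$ issue for $G$ at the origin when some $f_k$ vanishes, which the paper's proof ignores.
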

\begin{proof}
Fix an $x$. In equation \ref{main} $F_{\cup} (D^2 u) = \tr(AD^2u)$ which is obviously smooth and uniformly elliptic. As for $F_{\cap, \al}(D^2 u) = \sigma _{\al, B_{\al}} (D^2 u)$, firstly by means of diagonalising the quadratic form $B_{\al}$ we may assume that it is the identity matrix. Thus, at the point $x$ we see that $F_{\cap, \al}(D^2 u)$ is a positive multiple of the $\al$th symmetric polynomial. Hence it is elliptic if $C I  > D^2 u > 0$ \footnote{It may not be uniformly elliptic because we don't have a given lower bound on $D^2 u$, but that is not a requirement anyway.}. Therefore $F(D^2 u)$ is uniformly elliptic. Moreover, the function $G(x) = x^{1/n}$ defined on $\mathbb{R}_{>0}$ satisfies the conditions required by definition \ref{weco}. Indeed, since $(\sigma _{k, B_k}) ^{1/k}$ is concave it is clear that $(\sigma _{k, B_k}) ^{1/n}$ is too.
\end{proof}
\section{Proof of theorem \ref{estimates}}\label{higher}
%\indent The interior $C^{2, \al}$ \emph{a priori} estimate is guaranteed by theorem \ref{Colthm} if  $G(\sum f_k \sigma _{k, B_k} (D^2 u) )$ is concave for some function $G$. Unfortunately it isn't. However, as mentioned in section \ref{prel} the functions $\left ( \sigma _{k, B_k} (D^2 u) \right )^{1/n}$ are indeed concave. \\
\indent As mentioned in the introduction we prove a stronger version of theorem \ref{Colthm}, i.e. instead of $F_{\cup} + F_{\cap} = 0$ we have $F_{\cup} +\displaystyle \sum _{\al = 1} ^m F_{\cap,\alpha} = 0$ where there exists a $G$ so that $G(F_{\cap, \alpha})$ is concave for every $\alpha$.
\indent The strategy to prove theorem \ref{estimates} is exactly the one used in \cite{Caff, Col1, Col2}. Here is a high-level overview:
\begin{enumerate}
\item One first reduces the content of theorem \ref{estimates} to the case where $F(D^2 u,x)$ does not depend on $x$. Indeed, one can use a blowup argument \`a la \cite{Col2} to conclude this. This reduction step requires $F$ to be uniformly elliptic which it is by assumption.
\item In the case of $F(D^2 u)=0$, one proves that the level set of $u$ is very ``close" to a quadratic polynomial satisfying $F(D^2 P)=0$ (after ``zooming" in so to say). This is done by proving that $F_{\cup} (D^2 u)$ concentrates in measure near its level set, and using the Alexandrov-Bakelmann-Pucci estimate in conjunction with the usual Evans-Krylov theory to conclude the existence of a polynomial close to $u$. Then one perturbs the polynomial to make it satisfy $F(D^2 P)=0$.
\item Then it may be proven that one can find a family of such quadratic polynomials with the ``closeness" improving in a quantitative way on the size (the smaller the better) of the neighbourhood of the point in consideration.
\item This can be used to prove that the second derivative does not change too much, i.e., the desired estimate on $\Vert D^2 u \Vert _{C^{\al}(B_{1/2})}$.
\end{enumerate}
Out of these, only step $2$ needs modification in our case.  To this end, we need the following lemma.
\begin{lemma}
Let $L$ be the linearisation of $\displaystyle F = F_{\cup} + \sum _{\al} F_{\cap , \al}$, i.e. $\displaystyle L^{ab} = F_{\cup} ^{ab} + \sum _{\al} F_{\cap, \al} ^{ab}$. Then
$$\displaystyle L\left(\sum _\al G(F_{\cap, \al} (D^2 u))\right) \leq 0.$$
\label{subso}
\end{lemma}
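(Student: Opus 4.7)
The plan is to mirror Collins's Evans--Krylov-style computation in the single $F_{\cap}$ case and adapt it to the sum. Set $\tilde H(A) := \sum_{\al} G(F_{\cap,\al}(A))$; by condition (2) of Definition \ref{weco} this is a concave function of the symmetric matrix $A$, being a sum of concave functions. The chain rule gives
\[
L^{ab}\partial_a\partial_b\bigl(\tilde H(D^2u)\bigr)\;=\;L^{ab}\,\tilde H^{ij,kl}\,u_{ija}u_{klb}\;+\;\tilde H^{ij}\,L^{ab}\,u_{ijab},
\]
where $\tilde H^{ij}=\sum_{\al}G'(F_{\cap,\al})F_{\cap,\al}^{ij}$ is positive semi-definite and the Hessian form $\tilde H^{ij,kl}$ is negative semi-definite on symmetric matrices.

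First I would dispose of the Hessian summand. Since $L^{ab}$ is PSD by ellipticity of $F$, diagonalise $L^{ab}=\sum_c\mu_c^a\mu_c^b$, so the summand equals $\sum_c \tilde H^{ij,kl}X_c^{ij}X_c^{kl}$ with $X_c^{ij}:=\mu_c^au_{ija}$ symmetric; each term is $\leq 0$ by negative semi-definiteness of $\tilde H^{ij,kl}$.

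For the gradient summand, differentiating the equation $F(D^2u)=0$ twice in $x$ produces the identity $L^{ab}u_{ijab}=-F^{ab,kl}u_{abi}u_{klj}$, so this summand equals $-\tilde H^{ij}F^{ab,kl}u_{abi}u_{klj}$. Decomposing $F^{ab,kl}=F_{\cup}^{ab,kl}+\sum_{\be}F_{\cap,\be}^{ab,kl}$, the convex piece gives a $\leq 0$ contribution: after diagonalising $\tilde H^{ij}=\sum_c\nu_c^i\nu_c^j$, convexity of $F_{\cup}$ yields $-\tilde H^{ij}F_{\cup}^{ab,kl}u_{abi}u_{klj}=-\sum_c F_{\cup}^{ab,kl}Y_c^{ab}Y_c^{kl}\leq 0$ with $Y_c^{ab}:=\nu_c^iu_{abi}$.

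The main obstacle is the concave contribution $-\sum_{\be}\tilde H^{ij}F_{\cap,\be}^{ab,kl}u_{abi}u_{klj}$, since each $F_{\cap,\be}^{ab,kl}$ has no definite sign. To handle it I would invoke once more the bilinear concavity of $G\circ F_{\cap,\be}$, namely $G''(F_{\cap,\be})(F_{\cap,\be}^{ab}\xi_{ab})^2+G'(F_{\cap,\be})F_{\cap,\be}^{ab,kl}\xi_{ab}\xi_{kl}\leq 0$ for all symmetric $\xi$, contracted against $\tilde H^{ij}$ via its rank-one decomposition. Summing over $\be$ and regrouping, the cross terms between distinct indices $\al,\be$ are organised using condition (3) of Definition \ref{weco}, which compares $\sum_{\al}G(F_{\cap,\al}(D^2u))$ with $G(\sum_{\al}F_{\cap,\al}(D^2u))=G(-F_{\cup}(D^2u))$; this reduces the mixed sum to the structure of Collins's single-$F_{\cap}$ computation and yields the required sign.
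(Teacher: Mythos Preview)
Your first two steps are fine: concavity of $\tilde H=\sum_\al G(F_{\cap,\al})$ together with ellipticity of $L$ controls the Hessian summand, and convexity of $F_{\cup}$ together with $\tilde H^{ij}\ge 0$ controls the $F_{\cup}^{ab,kl}$ part of the gradient summand. The gap is in your last paragraph. The leftover term is $-\tilde H^{ij}\sum_\be F_{\cap,\be}^{ab,kl}u_{abi}u_{klj}$, where $F_{\cap,\be}^{ab,kl}$ carries no sign. You propose to control it via condition~(3) of Definition~\ref{weco}, but that condition is a \emph{zeroth-order} comparison of the values $G(y_\al)$ and $G\bigl(\sum y_\al\bigr)$ and says nothing whatsoever about the bilinear form $F_{\cap,\be}^{ab,kl}$. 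Re-invoking concavity of $G\circ F_{\cap,\be}$ only yields the upper bound $F_{\cap,\be}^{ab,kl}\xi_{ab}\xi_{kl}\le -(G''/G')\,(F_{\cap,\be}^{ab}\xi_{ab})^2$, whose right side is nonnegative and hence points the wrong way for your purposes. Nothing in your outline actually produces a nonpositive sign here, and the sentence ``reduces the mixed sum to the structure of Collins's single-$F_{\cap}$ computation'' is not a computation.

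The paper's device is not to sign this term but to \emph{cancel} it. One does not discard the entire Hessian summand at once; instead one splits $L^{ab}=F_{\cup}^{ab}+\sum_\be F_{\cap,\be}^{ab}$ inside the Hessian summand and withholds the piece $\sum_{\al,\be}F_{\cap,\be}^{ab}\,G'\,F_{\cap,\al}^{ij,rs}\,u_{ija}u_{rsb}$. After swapping the dummy labels $\al\leftrightarrow\be$ this matches, with opposite sign, the piece $-\sum_{\al,\be}G'\,F_{\cap,\al}^{ab}F_{\cap,\be}^{ij,rs}\,u_{ija}u_{rsb}$ coming from the gradient summand; the paper singles out precisely this cancellation as ``perhaps the main point of this calculation'' and as the reason a \emph{single} $G$ (rather than separate $G_\al$) is imposed. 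Condition~(3) plays no role in this lemma. What survives after the cancellation are the three manifestly signed pieces in the paper's display~\eqref{eq2}, each $\le 0$. Your argument spends on the Hessian estimate exactly the piece that is needed for this cancellation, leaving an unsignable remainder; you should regroup as in~\eqref{eq}--\eqref{eq2}.
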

\begin{proof}
We may compute
\begin{gather}
\partial _a G (F_{\cap, \al} (D^2 u)) = G^{'} F_{\cap, \al} ^{ij} u_{x_ax_ix_j} \nonumber \\
\partial _{ab} G (F_{\cap, \al} (D^2 u)) = G^{''} F_{\cap, \al} ^{ij} u_{x_ax_ix_j}F_{\cap, \al} ^{rs} u_{x_bx_rx_s} + G^{'} F_{\cap, \al} ^{ijrs} u_{x_ax_ix_j}u_{x_bx_rx_s} + G^{'} F_{\cap , \al} u_{x_ax_bx_ix_j}. \nonumber \\
\end{gather}
Moreover, using the equation itself we obtain,
\begin{gather}
L^{ab} u_{x_ax_bx_i} = (F_{\cup} ^{ab}+ \sum _{\al} F_{\cap, \al} ^{ab}) u_{x_ax_bx_i} = 0 \nonumber \\
L^{ab} u_{x_ax_bx_ix_j}+(F_{\cup} ^{ab rs} + \sum _{\al} F_{\cap, \al} ^{abrs}) u_{x_ax_bx_i}u_{x_rx_sx_j} = 0.
\end{gather}
Then we get
\begin{gather}
\displaystyle L\left(\sum _{\al=1} ^m G(F_{\cap, \al} (D^2 u))\right) = \sum _{\al=1} ^m L^{ab} (G^{''} F_{\cap, \al} ^{ij} u_{x_ax_ix_j}F_{\cap, \al} ^{rs} u_{x_bx_rx_s} + G^{'} F_{\cap, \al} ^{ijrs} u_{x_ax_ix_j}u_{x_bx_rx_s} + G^{'} F_{\cap , \al} ^{ij} u_{x_ax_bx_ix_j}) \nonumber \\
= \sum _{\al=1} ^m L^{ab} (G^{''} F_{\cap, \al} ^{ij}F_{\cap, \al} ^{rs} +G^{'} F_{\cap, \al} ^{ijrs})  u_{x_ax_ix_j}u_{x_bx_rx_s} + G^{'} L^{ab} F_{\cap , \al}^{ij} u_{x_ax_bx_ix_j} \nonumber \\
= \sum _{\al=1} ^m \left ( (F_{\cup} ^{ab}+ \sum _{\be} F_{\cap, \be} ^{ab}) (G^{''} F_{\cap, \al} ^{ij}F_{\cap, \al} ^{rs} +G^{'} F_{\cap, \al} ^{ijrs})  u_{x_ax_ix_j}u_{x_bx_rx_s} - G^{'} F_{\cap , \al}^{ab} (F_{\cup} ^{ij rs} + \sum _{\be} F_{\cap, \be} ^{ijrs}) u_{x_ix_jx_a}u_{x_rx_sx_b}\right ) \label{eq} \\
= \sum _{\al=1} ^m \left (  F_{\cup} ^{ab} (G^{''} F_{\cap, \al} ^{ij}F_{\cap, \al} ^{rs} +G^{'} F_{\cap, \al} ^{ijrs})  u_{x_ax_ix_j}u_{x_bx_rx_s} + \sum _{\be} F_{\cap, \be} ^{ab} G^{''} F_{\cap, \al} ^{ij}F_{\cap, \al} ^{rs} u_{x_ix_jx_a}u_{x_rx_sx_b} - G^{'} F_{\cap , \al}^{ab} F_{\cup} ^{ij rs} u_{x_ix_jx_a}u_{x_rx_sx_b} \right ) \label{eq2}
\end{gather}
At this point we note that since $G\circ F_{\cap ,\al}$ is concave and $F_{\cup}$ is elliptic the first term in \ref{eq2} is negative. Likewise, so is the second term because $G^{''}\leq 0$ and $F_{\cap}$ is also elliptic. Since $F_{\cup}$ is convex, so is the third term. Hence we see that
$$\displaystyle L\left(\sum _\al G(F_{\cap, \al} (D^2 u))\right) \leq 0.$$
Note that in equation \ref{eq} the terms of the form $F^{ab}_{\cap, \al} F^{ijrs}_{\cap, \be}$ cancelled out. This is perhaps the main point of this calculation. If we had different $G_{\al}$ for each $\al$ this would not have happened.
\end{proof}
Secondly, we need the following proposition that actually addresses step $2$ in the strategy described above.
\begin{prop}
Under the assumptions of the main theorem, for any given $\ep >0$ there exists a positive constant $\eta = \eta (c,m, \Vert G \Vert _{L^{\infty}}, \Vert F_{\cap , \al}\Vert _{L^{\infty}},n,\lambda, \Lambda, \epsilon, \gamma, \Gamma, \Vert D^2 u \Vert _{L^{\infty}})$ quadratic polynomial $P$ so that for all $x$ in $B_1$,
\begin{gather}
\vert \frac{1}{\eta^2} u(\eta x) - P(x) \vert \leq \ep \nonumber \\
F(D^2 P) = 0 \nonumber
\end{gather}
\label{close}
\end{prop}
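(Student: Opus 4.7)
My approach is to adapt the Caffarelli--Yuan/Collins strategy, with Lemma \ref{subso} replacing the single-$F_{\cap}$ subsolution lemma in Collins' work. After the reduction in step (1) of the outline preceding Lemma \ref{subso}, we may assume $F$ is independent of $x$, so that $u_{\eta}(y) := \eta^{-2} u(\eta y)$ satisfies $F(D^2 u_{\eta}) = 0$ on $B_1$ with the same ellipticity constants as $u$ and a uniform $C^{1,1}$ norm.

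I would argue by compactness and contradiction. Suppose the conclusion fails for some $\ep > 0$: then there is a sequence $\eta_k \to 0$ such that no quadratic $P$ with $F(D^2 P) = 0$ satisfies $\|u_{\eta_k} - P\|_{C^0(B_1)} \leq \ep$. Normalizing so that $u_{\eta_k}(0) = 0$ and $\nabla u_{\eta_k}(0) = 0$ (an adjustment that can be absorbed into the affine part of $P$), the uniform $C^{1,1}$ bound yields via Arzel\`a--Ascoli a subsequential $C^{1,\beta}$ limit $u_{\infty}$ on $B_1$, which is a viscosity solution of $F(D^2 u_{\infty}) = 0$. The semigroup property $(u_{\eta})_{t} = u_{\eta t}$ passes to the limit and gives $(u_{\infty})_{t} = u_{\infty}$ for every $t > 0$, i.e.\ $u_{\infty}$ is $2$-homogeneous; hence once $u_{\infty}$ is known to be $C^2$, it must be a quadratic $\tfrac{1}{2} Q y \cdot y$ with $F(Q) = 0$, contradicting the assumption.

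The task therefore reduces to showing that the limit $u_{\infty}$ is a classical $C^{2,\alpha}$ solution. For this I use Lemma \ref{subso}: the function $w_k := \sum_{\alpha} G(F_{\cap,\alpha}(D^2 u_{\eta_k}))$ is a uniformly $L^{\infty}$-bounded supersolution of a uniformly elliptic operator, with $\|w_k\|_{L^{\infty}} \leq m\,\|G\|_{L^{\infty}(W)}$. The Krylov--Safonov weak Harnack applied to the non-negative subsolution $\sup w_k - w_k$, combined with the lower bound $G' \geq \gamma > 0$ and condition (3) of Definition \ref{weco} (which compares $\sum_{\alpha} G(F_{\cap,\alpha})$ with $G(\sum_{\alpha} F_{\cap,\alpha})$ via the factor $c$), yields control in measure on the deviation of $\sum_{\alpha} F_{\cap,\alpha}(D^2 u_{\eta_k})$, and hence of $F_{\cup}(D^2 u_{\eta_k}) = -\sum_{\alpha} F_{\cap,\alpha}(D^2 u_{\eta_k})$, from a constant $c_k$. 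Passing to the limit, $F_{\cup}(D^2 u_{\infty})$ is a.e.\ equal to a constant on $B_1$; since $F_{\cup}$ is convex and uniformly elliptic, Evans--Krylov then promotes $u_{\infty}$ to a classical $C^{2,\alpha}$ solution of a convex uniformly elliptic equation, closing the contradiction.

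The main obstacle, relative to Collins' single-$F_{\cap}$ setting, is the measure-theoretic passage from concentration of $w = \sum_{\alpha} G(F_{\cap,\alpha})$ (which is all that Lemma \ref{subso} directly controls) to concentration of $\sum_{\alpha} F_{\cap,\alpha}$ itself, and hence of $F_{\cup}$; this is precisely the role of condition (3) of Definition \ref{weco}, which has no analogue in \cite{Col2}. A secondary point is obtaining the stronger relation $F(D^2 P) = 0$ for the limiting quadratic rather than the weaker $F_{\cup}(D^2 P) = c$; this is handled by a standard implicit function theorem perturbation of $D^2 P$ along a direction (e.g.\ the identity) on which $F$ is strictly monotone by uniform ellipticity, with smallness of the perturbation coming from the concentration estimate.
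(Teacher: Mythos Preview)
Your argument has a genuine gap at the homogeneity step. From $u_{\eta_k}\to u_\infty$ along a subsequence you cannot conclude $(u_\infty)_t=u_\infty$ for all $t>0$: the identity $(u_{\eta_k})_t=u_{\eta_k t}$ only tells you that $u_{\eta_k t}\to (u_\infty)_t$, but the sequence $\eta_k t$ is in general not a subsequence of $\eta_k$, so there is no reason its limit should be $u_\infty$. This is the familiar ``uniqueness of blow-ups'' problem, which typically requires a monotonicity formula (Weiss, Almgren, etc.) that is not available here. Without homogeneity you have no mechanism to force $u_\infty$ to be quadratic, even if you could show it is $C^{2,\alpha}$. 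A secondary issue is the passage ``$F_\cup(D^2 u_\infty)$ is a.e.\ constant'': since $D^2 u_{\eta_k}$ converges only weak-$*$ in $L^\infty$ and $F_\cup$ is nonlinear, you do not get $F_\cup(D^2 u_{\eta_k})\to F_\cup(D^2 u_\infty)$ a.e., so the concentration-in-measure at each finite scale does not obviously survive the limit.

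The paper avoids blow-up limits entirely and argues by direct dyadic iteration, exactly as in \cite{Caff,Col2}. One sets $t_k=\max_{\bar B_{1/2^k}}F_\cup(D^2u)$, $s_k=\min_{\bar B_{1/2^k}}\sum_\alpha G(F_{\cap,\alpha}(D^2u))$, and $E_k=\{F_\cup(D^2u)\le t_k-\xi\}$. If for some $k\le k_0$ the set $E_k$ has small measure, Collins' argument (ABP plus Evans--Krylov for the convex piece, then perturbation to hit $F(D^2P)=0$) produces the quadratic. Otherwise, Lemma~\ref{subso} and the weak Harnack inequality, together with condition~(3) of Definition~\ref{weco} (this is exactly the passage you identified, and your understanding of its role is correct), force $s_{k+1}\ge s_k+\theta$ for a fixed $\theta>0$. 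Since $s_k$ is bounded by the oscillation data, this alternative can occur at most $k_0=\mathrm{Osc}/\theta$ times, so one may take $\eta=2^{-k_0}$. No compactness, no homogeneity, and $\eta$ is explicit in the listed quantities.
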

\begin{proof}
We shall determine $k_0, \rho, \xi, \delta$ in the course of the proof. Let $1\leq k \leq k_0$ and $t_k = \max _{\bar{B}(1/2^k)} F_{\cup} (D^2 u)$ and $s_k = \displaystyle \min _{\bar{B}(1/2^k)} \sum _{\al =1}^m G(F_{\cap ,\al} (D^2 u))$. Also define $w_k (x) = 2^{2k} u(\frac{x}{2^k})$. Hence $D^2 w_k (x) = D^2 u (\frac{x}{2^k})$. \\
Note that since $G$ is increasing, $G(-t_k) = G\left(\min _{\bar{B}(1/2^k)} \displaystyle \sum _{\al =1}^m F_{\cap, \al} (D^2 u)\right) = \min _{\bar{B}(1/2^k)} G\left(\displaystyle \sum _{\al =1}^m F_{\cap, \al} (D^2 u)\right) \geq cs_k $. Moreover, $s_k \geq G(-t_k)$.\\
\indent If there exists an $l$ such that $1\leq l \leq k_0$ such that
\begin{gather}
 \vert E_k \vert \leq \delta \vert B_{1/2^l}\vert
\label{condition}
\end{gather}
where $E_k$ is the set of $x$ such that $F_{\cup}$ is ``close" to $t_k$, i.e. $F_{\cup} (D^2u) \leq t_k - \xi$, then we are done by the arguments of \cite{Col2}. If not, we shall arrive at a contradiction by actually proving the existence of such a $\delta$, $k$ and $l$. Indeed, assume the contrary. By lemma \ref{subso} we see that $L\left(\displaystyle \sum _{\al} G(F_{\cap, \al}(D^2 w_k)) - s_k \right) \leq 0$. By applying the weak Harnack inequality we see that for all $x$ in $B_{1/2}$
\begin{gather}
\displaystyle \sum _{\al} G(F_{\cap, \al}(D^2 w_k))(x) - s_k \geq C(n,\lambda) \Vert \sum _{\al} G(F_{\cap, \al}(D^2 w_k))(x) - s_k \Vert _{L^{p_0}(B_1)},
\end{gather}
where $p_0$ depends on $n, \lambda, \Lambda$. On $E_k$ we recall that $\displaystyle \sum _{\al} F_{\cap, \al}(D^2 w_k) \geq -t_k + \xi$, and hence $\displaystyle \sum _{\al} G(F_{\cap, \al}(D^2 w_k)) \geq G\left(\displaystyle \sum _{\al} F_{\cap, \al}(D^2 w_k)\right)  \geq G(-t_k +\xi) \geq G(-t_k) + \gamma \xi \geq c s_k + \gamma \xi$. Choose $\xi$ to be large enough so that $(c-1) s_k + \gamma \xi \geq \theta_0 > 0$ where $\theta_0$ does not depend on $k$. Of course such a $\theta_0$ would depend on $\Vert D^2 u \Vert _{L^{\infty} (B_1)}$, $\Vert F_{\cap, \al} \Vert_{L^{\infty}}$, and $\Vert G\Vert_{L^{\infty}}$. This means that
\begin{gather}
\displaystyle \sum _{\al} G(F_{\cap, \al}(D^2 w_k))(x)  \geq s_k +  C(n,\lambda) \theta _0 \delta ^{1/p_0} = s_k +\theta \nonumber
\end{gather}
In particular this means that $s_{k+1} \leq s_k + \theta$. At this point it follows that after $k_0 = \frac{\mathrm{Osc}_{B_1} (\sum_{\al} F_{\cap ,\al} (D^2 u))}{\theta}$ iterations condition \ref{condition} ought to hold.
\end{proof}
The rest of the proof of theorem \ref{estimates} is exactly the same as in \cite{Caff}. \\
\begin{comment}
Now we continue with the proof of theorem \ref{mainthm}. \\
\emph{Proof of theorem \ref{mainthm}} : In section \ref{prel} the conditions for the applicability of theorem \ref{estimates} were verified. Hence we see that for every compact subset $K$ of $D$, $\Vert u \Vert _{C^{2,\al}(K)} \leq C_K$. The interior estimates together with the uniform ellipticity of equation \ref{main} actually imply boundary $C^{2,\al}$ estimates thanks to a theorem of Krylov whose simplified proof may be found in \cite{Kaz} for instance. \\
\indent We are not done yet. So far we have proven that if there is a sequence $t_n \in[0,1]$ such that $u_{t_n}$ exists, then upto a subsequence $u_{t_n} \rightarrow u$ in the $C^{2,\al}$ norm. Thanks to standard elliptic theory this means that $u$ is actually smooth. However, we also need $D^2 u >0$. Indeed, assume the contrary, i.e., there exists a point $x \in \bar{D}$ such that $\det(D^2 u) = 0$. Then the constant rank theorem of Bian-Guan \cite{Guan} proves that $\det(D^2 u) = 0$ throughout. Since $u=0$ on $\partial D$, the uniqueness of the viscosity solution to the degenerate Monge-Amp\'ere equation implies that  $D^2 u = 0$ which is a contradiction. \qed\\
\end{comment}
\section{Proof of proposition \ref{locthm}}\label{easy}
We reduce theorem \ref{locthm} to Krylov's equation \ref{Kryeq} and invoke the existence result in \cite{Kryl}. Indeed, define $v = u + \frac{1}{2} \displaystyle \sum _{i=1} ^n x_i ^2$. Then $D^2 v = D^2 u + I$. The eigenvalues of $D^2 v$ are $\mu_i = \lambda _i +1$. Consider the equation
\begin{gather}
\mu_1 \mu _2 \ldots \mu _n - \displaystyle \sum _{i=1} ^n \mu _i = f-n+1 \ on \ D \nonumber \\
v\vert_{\partial D} = \phi + \frac{1}{2} \displaystyle \sum _{i=1} ^n x_i ^2.
\label{neweq}
\end{gather}
Writing equation \ref{neweq} in terms of $\lambda _i$ we see quite easily that equation \ref{locequa} is recovered. Thus, Krylov's theorem \cite{Kryl} states that there is a unique smooth solution to \ref{neweq} in the ellipticity cone as long as the right hand side is positive. This proves proposition \ref{locthm}. \qed \\
\indent As mentioned in the introduction, the restriction $f > n-1$ may not be optimal (as is easily seen by considering a radial solution in the case of the ball with a constant $f$). However, the following counterexample shows that the case $f<0$ does not admit solutions in the ellipticity cone. 
\begin{prop}
There is no smooth solution $u$ of the following equation satisfying $\mu _1 \ldots \mu _{i-1} \mu _{i+1} \ldots \mu _n > 1$ and $\mu_i>0$ where $\mu _i$ are the eigenvalues of $D^2 v$.
\begin{gather}
\det (D^2 v) - \Delta v = -c \ in \ B(1) \nonumber \\
v\vert_{\partial B(1)} = 0 
\label{countereq}
\end{gather} 
where $c>n-1$ is a constant.
\label{counter}
\end{prop}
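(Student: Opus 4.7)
The plan is to integrate the PDE over $B(1)$ and reinterpret the two resulting integrals geometrically via the gradient map of $v$. Since the hypothesis $\mu_i > 0$ means $v$ is smooth and strictly convex on $\bar{B}(1)$, and $v|_{\partial B(1)} = 0$ forces $v < 0$ in the interior, I would first argue that $v$ attains its minimum at some interior point $p_0$ (so $\nabla v(p_0) = 0$) and that $\nabla v : \bar{B}(1) \to \overline{\Omega}$, with $\Omega := \nabla v(B(1))$, is a $C^1$ diffeomorphism. The equation can be rewritten as $\Delta v = \det(D^2 v) + c > 0$, so $v$ is strictly subharmonic. Hopf's boundary point lemma then gives $v_\nu > 0$ on $\partial B(1)$, and together with $v \equiv 0$ there this forces $\nabla v(x) = v_\nu(x)\, x$ for every $x \in S^{n-1}$.

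Next I would compute $\vol(\Omega)$ geometrically. The boundary $\partial \Omega = \nabla v(\partial B(1))$ is exactly the radial hypersurface $\{ v_\nu(\theta)\theta : \theta \in S^{n-1} \}$, a topologically embedded $(n-1)$-sphere in $\RR^n$. By Jordan-Brouwer separation, together with the fact that $\Omega$ is open, connected, bounded, and contains $0$, one concludes that $\Omega$ must be the inner component, i.e., the star-shaped domain with radial function $v_\nu$. Hence $\vol(\Omega) = \frac{1}{n} \int_{S^{n-1}} v_\nu^n\, dS$. Combining this with the change-of-variables identity $\int_{B(1)} \det(D^2 v)\, dx = \vol(\Omega)$ and the divergence theorem $\int_{B(1)} \Delta v\, dx = \int_{S^{n-1}} v_\nu\, dS$, and using $n |B(1)| = \omega_{n-1}$ where $\omega_{n-1}$ is the surface area of $S^{n-1}$, integrating the equation $\det(D^2 v) - \Delta v = -c$ over $B(1)$ will yield
\[
\int_{S^{n-1}} \left( n v_\nu - v_\nu^n \right)\, dS = c\, \omega_{n-1}.
\]

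To finish, I would invoke the elementary one-variable inequality $n t - t^n \leq n - 1$ for $t \geq 0$, with equality only at $t = 1$ (a short calculus exercise). Applied pointwise on $S^{n-1}$ and integrated, this bounds the left-hand side above by $(n-1)\omega_{n-1}$, so that $c \leq n - 1$, contradicting $c > n - 1$. The main step demanding care is the identification of $\Omega$ with the star-shaped region having radial function $v_\nu$, for which Jordan-Brouwer separation, the vanishing Dirichlet data, and Hopf's lemma all play a role; the remaining steps are routine applications of the divergence theorem and the change-of-variables formula. It is worth noting that the ellipticity condition $\prod_{j\neq i}\mu_j > 1$ is never used in this argument; only strict convexity $\mu_i > 0$ is needed, so the non-existence is actually somewhat stronger than the statement requires.
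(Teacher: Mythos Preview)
Your proof is correct and takes a genuinely different route from the paper's. The paper argues via the method of moving planes: using the ellipticity hypothesis $\prod_{j\neq i}\mu_j>1$ to guarantee that the linearised operator $L^{ij}$ in \eqref{mov} is positive definite along the interpolation, it shows that any solution must be radial, hence of the form $v=\tfrac{A}{2}(r^2-1)$, and then observes that $A^n-nA+c=0$ has no positive root when $c>n-1$. Your argument bypasses moving planes entirely: you integrate the equation, interpret $\int_{B(1)}\det(D^2v)$ as the volume of the star-shaped image of $\nabla v$, apply the divergence theorem to $\int_{B(1)}\Delta v$, and reduce everything to the pointwise scalar inequality $nt-t^n\le n-1$ on $S^{n-1}$. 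This is more elementary (no strong maximum principle iteration, no reflection) and, as you note, it only uses strict convexity $\mu_i>0$ and never the stronger ellipticity condition $\prod_{j\neq i}\mu_j>1$; so your nonexistence statement is slightly sharper. The one step that merits care, identifying $\Omega=\nabla v(B(1))$ with the star-shaped region of radial function $v_\nu$, is handled correctly by your combination of Hopf's lemma (giving $v_\nu>0$), the observation that $\nabla v$ is a diffeomorphism on $\bar B(1)$, and Jordan--Brouwer separation. By contrast, the paper's approach, while heavier, illustrates a technique (moving planes) applicable to more general right-hand sides for which an integral identity alone would not suffice.
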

\begin{proof}
We first show that such a solution has to be radially symmetric. To this end, we use the standard method of moving planes \cite{Evans}. For $0\leq t \leq 1$ consider the plane $P_{t} : x_n = t$. Let the reflection of the point $x$ across the plane $P_{t}$ be $x_{t} = (x_1,\ldots,x_{n-1},2t-x_n)$ and let $E_{t} = \{ x \in B(1) \vert t <x_n \leq 1 \}$. We prove that 
$$u(x) > u(x_{t}) \ \forall \ x \ \in \ E_t \ (property \ (L)).$$
  Near any boundary point the function is strictly increasing as a function of $x_n$ because $\frac{\partial u}{\partial n} \geq 0$ and $D^2 u >0$. Hence (L) holds for $t<1$ sufficiently close to $1$. Let the infimum of all such $t$ be $t_0$. If $t_0 >0$, then consider $w(x)=u(x)-u(x_{t_0})$ where $x \in E_{t_0}$. Upon subtracting the equations for $u(x)$ and $u(x_{t_0}$ we see that
\begin{gather}
\det(D^2 u(x)) - \Delta (u(x)) - (\det(D^2 u (x_{t_0}))-\Delta u(x_{t_0})) = 0 \nonumber \\
\Rightarrow \displaystyle \int _{0} ^1 \frac{d}{ds} (\det(D^2 (s u(x) +(1-s)u(x_{t_0})))-\Delta (s u(x) +(1-s)u(x_{t_0}))) = 0 \nonumber \\
\Rightarrow L^{ij} w_{ij} (x) = 0,
\label{mov}
\end{gather}
where $L^{ij}$ is a positive definite matrix depending on $u$. Note that we have used the assumption that $D^2 u$ is in the ellipticity cone and the fact that the cone is convex for this equation. Since $w\geq 0$ in $E_{t_0}$ and $w=0$ on the plane $P_{t_0}$, by applying the strong minimum principle we see that $w>0$ in $E_{t_0}$. Applying the Hopf lemma to points on the plane $P_{t_0}$  we see that $w_{x_n} > 0$ on $P_{t_0}\cap B(1)$. Since $w_{x_n} = 2 u_{x_n}$ on the plane, we see that for $t$ slightly less than $t_0$ property (L) holds. This is a contradiction. Thus $t_0=0$. Since the problem is rotationally symmetric, $u$ is radial. The unique radial solution to the problem (if it exists) is easily seen to be of the form $\frac{A(r^2-1)}{2}$ for some constant $A>0$. This means that $A^n - nA +c =0$. It is easy to see that this equation admits no positive solutions. 
\end{proof}

\end{document}